\documentclass[11pt]{article}

\usepackage{amsmath,amssymb,amsthm,mathtools}
\usepackage[T1]{fontenc}

\usepackage{tikz-cd}

\usepackage{authblk}

\title{\bf Haar-Type Measures on Topological Quasigroups and Kunen's Theorem}

\author{\Large Takao Inou\'{e}}

\affil{\large Faculty of Informatics, Yamato University, \\ Osaka, Japan\footnote{Email: inoue.takao@yamato-u.ac.jp; \\ Personal Email: takaoapple@gmail.com \\ [I prefer my personal email address for correspondence.]\\ This is the second version.}} 
\date{March 11, 2026}

\newtheorem{definition}{Definition}
\newtheorem{remark}{Remark}
\newtheorem{proposition}{Proposition}
\newtheorem{theorem}{Theorem}
\newtheorem{conjecture}{Conjecture}
\newtheorem{lemma}{Lemma}

\newtheorem{problem}{Problem}

\begin{document}

\maketitle

\begin{abstract}
Haar measure is a fundamental structure in harmonic analysis on locally compact
groups.  Its existence reflects the compatibility between topology and the
associative algebraic structure of groups.  In this paper we propose a
framework for Haar-type measures on topological quasigroups.  Since
associativity is absent, strict translation invariance is generally too strong
to expect.  We therefore introduce quasi-invariant measures whose defect is
measured by a modular cocycle attached to translations.

We then explain, in a detailed and cautious form, how Moufang-type identities
may impose strong constraints on this cocycle.  In particular, under additional
quasi-invariance assumptions for right translations, the Moufang-type identity
$(N1)$ leads naturally to a multiplicativity relation for the cocycle.  This
suggests a measure-theoretic interpretation of Kunen's theorem: the emergence
of loop structure may be viewed as the collapse of a modular defect in the
translation geometry of a quasigroup.

This perspective suggests a possible measure-theoretic
interpretation of Kunen's theorem in terms of unimodularity
of the translation geometry.
\end{abstract}

\noindent\textbf{Keywords:}
topological quasigroup, Haar-type measure, quasi-invariant measure, modular cocycle, Moufang identity, loop, Kunen's theorem
\medskip

\noindent\textbf{MSC2020:}
20N05, 22A05, 28C10, 43A05

\tableofcontents

\section{Introduction}

Haar measure is one of the central structures in the theory of locally compact
groups.  If $G$ is a locally compact group, then there exists a nonzero regular
Borel measure $\mu$ such that
\[
\mu(gE)=\mu(E)
\qquad (g\in G),
\]
for every Borel set $E\subseteq G$.  Equivalently, if $L_g:G\to G$ denotes left
translation,
\[
L_g(x)=gx,
\]
then
\[
(L_g)_*\mu=\mu
\qquad (g\in G).
\]
This compatibility between the topology and the translation structure is one of
the foundations of abstract harmonic analysis
\cite{Haar1933,HewittRoss,Folland}.

A natural question is whether an analogous structure can exist for
\emph{topological quasigroups}.  Quasigroups retain the bijectivity of left and
right translations, but they lack associativity
\cite{Pflugfelder,Smith,Bruck}.  Consequently, the classical proof of the Haar
theorem cannot be transferred directly.  In particular, the family of left
translations is no longer modeled by the quasigroup itself in the way it is for
groups.

The purpose of this paper is not to claim a full Haar theorem for arbitrary
topological quasigroups.  Rather, the aim is to formulate a careful
measure-theoretic framework in which the failure of strict translation
invariance is encoded by a positive cocycle.  This cocycle plays the role of a
modular defect.  We then explain how Moufang-type identities may constrain this
defect.

The central motivating example is Kunen's theorem \cite{Kunen1996}, which
states that a quasigroup satisfying the Moufang-type identity $(N1)$ is
necessarily a loop.  The identity is
\[
((xy)z)y = x(y(zy)).
\]
Our guiding idea is that, under suitable quasi-invariance hypotheses, $(N1)$
forces strong compatibility relations among translation operators, and these in
turn constrain the modular cocycle.  In this sense, loop structure may be
viewed as a kind of unimodularity phenomenon in the translation geometry of a
quasigroup.

The purpose of this paper is to propose a framework in which Haar-type
structures can be studied in the quasigroup setting using quasi-invariant
measures and modular cocycles.

This paper proposes a measure-theoretic viewpoint on quasigroup structure,
suggesting that loop identities may be interpreted as a form of
unimodularity in the translation geometry of topological quasigroups.

\begin{remark}
The present paper is primarily conceptual in nature.
We do not claim a general existence theorem for Haar-type
measures on arbitrary topological quasigroups.
Instead, the goal is to isolate a structural mechanism
linking translation operators, modular cocycles,
and Moufang-type identities.
\end{remark}

\section{Topological Quasigroups and Translation Operators}

\begin{definition}
A \emph{quasigroup} is a set $Q$ equipped with a binary operation
\[
Q\times Q\to Q,\qquad (x,y)\mapsto xy,
\]
such that for every $a,b\in Q$ there exist unique elements $x,y\in Q$ with
\[
ax=b,\qquad ya=b.
\]
Equivalently, the equations $au=b$ and $va=b$ are uniquely solvable in $u$ and
$v$.
\end{definition}

\begin{definition}
A \emph{topological quasigroup} is a quasigroup $Q$ endowed with a Hausdorff
topology such that multiplication and both division operations are continuous.
\end{definition}

For each $a\in Q$ we define the left and right translations
\[
L_a(x)=ax,\qquad R_a(x)=xa.
\]
By the quasigroup axioms, each $L_a$ and each $R_a$ is a bijection.  In a
topological quasigroup they are in fact homeomorphisms, since the inverse maps
are given by the corresponding division operations.

\begin{remark}
For groups one has
\[
L_a\circ L_b = L_{ab}.
\]
This identity is one of the key reasons why Haar measure is available in the
group setting.  In a quasigroup this relation generally fails, because
\[
(L_a\circ L_b)(x)=a(bx)
\]
need not be equal to
\[
(ab)x=L_{ab}(x).
\]
Thus the loss of associativity appears immediately at the level of translation
operators.
\end{remark}

Nevertheless, left and right translations still generate genuine groups of
homeomorphisms.

\begin{definition}
The \emph{left multiplication group} of a quasigroup $Q$ is the subgroup
\[
\mathrm{LMlt}(Q)\leq \mathrm{Homeo}(Q)
\]
generated by all left translations $L_a$ $(a\in Q)$.
Similarly, the \emph{right multiplication group} $\mathrm{RMlt}(Q)$ is the
subgroup generated by all right translations, and the
\emph{multiplication group} $\mathrm{Mlt}(Q)$ is generated by both left and
right translations.
\end{definition}

\section{The Moufang Identity and Translation Calculus}

We now turn to the Moufang-type identity
\[
((xy)z)y = x(y(zy)).
\tag{N1}
\]

The key point is that this identity can be rewritten as an operator identity on
$Q$.

\begin{theorem}[Translation form of the Moufang identity (N1)]
\label{thm:translation-form-N1}
Let $Q$ be a quasigroup.
Then the Moufang identity
\[
((xy)z)y = x(y(zy))
\]
holds for all $x,y,z\in Q$ if and only if the translation operators satisfy
\[
R_y \circ L_{xy} = L_x \circ L_y \circ R_y
\qquad (x,y\in Q).
\]
\end{theorem}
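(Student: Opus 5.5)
The plan is to prove the equivalence by evaluating both operator expressions at an arbitrary point $z\in Q$. The identity $(N1)$ should then appear verbatim, with $z$ playing the role of the third variable. Two self-maps of $Q$ are equal exactly when they agree at every point, so the statement reduces to a quantifier-matching argument.

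\emph{Step 1: unfold the left-hand operator.} Fix $x,y\in Q$ and compute, using only the definitions $L_a(u)=au$ and $R_a(u)=ua$ together with the convention $(f\circ g)(z)=f(g(z))$:
\[
(R_y\circ L_{xy})(z)=R_y\bigl((xy)z\bigr)=((xy)z)y .
\]

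\emph{Step 2: unfold the right-hand operator.} Apply the three maps from the inside outwards:
\[
(L_x\circ L_y\circ R_y)(z)=L_x\bigl(L_y(zy)\bigr)=x\bigl(y(zy)\bigr).
\]
Hence, for fixed $x,y$, the operator equation $R_y\circ L_{xy}=L_x\circ L_y\circ R_y$ holds if and only if $((xy)z)y=x(y(zy))$ for every $z\in Q$.

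\emph{Step 3: match the quantifiers.} Quantify the conclusion of Step 2 over all $x,y\in Q$. On one side this gives the operator identity for all $x,y$. On the other side it gives $(N1)$ for all $x,y,z$. Both implications therefore follow at once.

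I do not expect a genuine obstacle. The only care needed is in Steps 1 and 2, to respect the order of composition (the rightmost map acts first) and the placement of parentheses, since the operation is non-associative. It is also worth remarking that the quasigroup axioms (bijectivity of the translations) and the topology are not used. The equivalence holds in any magma, and the translation operators here are simply maps $Q\to Q$.
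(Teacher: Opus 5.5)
Your proposal is correct and follows essentially the same route as the paper: unfold $(R_y\circ L_{xy})(z)=((xy)z)y$ and $(L_x\circ L_y\circ R_y)(z)=x(y(zy))$ at an arbitrary $z$, then match quantifiers. Packaging both directions as a single pointwise equivalence for fixed $x,y$ is only a cosmetic difference. Your remark that no quasigroup or topological hypothesis is used, so the equivalence holds in any magma, is also correct.
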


\begin{proof}

Assume first that the Moufang identity
\[
((xy)z)y = x(y(zy))
\]
holds for all $x,y,z\in Q$.

Let $x,y\in Q$ be fixed and let $z\in Q$ be arbitrary.
We compute the action of the operators on $z$.

First,
\[
(R_y \circ L_{xy})(z)
=
R_y((xy)z)
=
((xy)z)y .
\]

On the other hand,

\[
(L_x \circ L_y \circ R_y)(z)
=
L_x(L_y(R_y(z))).
\]

Since
\[
R_y(z)=zy,
\]
we obtain

\[
L_y(R_y(z)) = y(zy).
\]

Applying $L_x$ gives

\[
L_x(y(zy)) = x(y(zy)).
\]

Therefore

\[
(L_x \circ L_y \circ R_y)(z)
=
x(y(zy)).
\]

By the Moufang identity these two expressions coincide, hence

\[
(R_y \circ L_{xy})(z)
=
(L_x \circ L_y \circ R_y)(z)
\]

for every $z\in Q$.

Thus

\[
R_y \circ L_{xy} = L_x \circ L_y \circ R_y .
\]

Conversely, suppose the translation identity

\[
R_y \circ L_{xy} = L_x \circ L_y \circ R_y
\]

holds for all $x,y\in Q$.
Applying both sides to an arbitrary element $z\in Q$ gives

\[
(R_y \circ L_{xy})(z)
=
(L_x \circ L_y \circ R_y)(z).
\]

Expanding both sides yields

\[
((xy)z)y = x(y(zy)).
\]

Thus the Moufang identity holds.

\end{proof}

\section{Haar-Type Measures and Modular Cocycles}

\subsection{Quasi-invariance under left translations}

The natural analogue of Haar measure in the quasigroup setting should interact
with the translation structure.  Since strict invariance is too strong to
expect in general, we begin with quasi-invariance.
Let $T:X\to X$ be a measurable map and let $\mu$ be a measure on $X$.
The push-forward measure $T_*\mu$ is defined by
\[
(T_*\mu)(E) = \mu(T^{-1}(E))
\]
for every measurable set $E$.

Equivalently, for every nonnegative measurable function $f$,
\[
\int f(x)\,d(T_*\mu)(x)
=
\int f(T(x))\,d\mu(x).
\]

\begin{definition}
Let $Q$ be a locally compact Hausdorff topological quasigroup.  A regular
Borel measure $\mu$ on $Q$ is called a \emph{left quasi-invariant measure}
if for every $a\in Q$ there exists a positive scalar $j(a)$ such that
\[
(L_a)_*\mu = j(a)\mu.
\]
The function
\[
j:Q\to \mathbb{R}_{>0}
\]
is called the \emph{left modular cocycle} associated with $\mu$.
\end{definition}

\begin{remark}
The existence of such quasi-invariant measures is a nontrivial
problem in general.
Unlike the group case, there is currently no general analogue
of the Haar existence theorem for topological quasigroups.
In this paper we work conditionally under the assumption
that such measures are given.
\end{remark}

Here $(L_a)_*\mu$ is the push-forward measure defined by
\[
((L_a)_*\mu)(E)=\mu(L_a^{-1}(E))
\]
for Borel sets $E\subseteq Q$.

The same definition can be expressed at the level of integration.  Namely, for
every nonnegative Borel function $f$ one has
\[
\int_Q f(x)\,d((L_a)_*\mu)(x)
=
\int_Q f(L_a(x))\,d\mu(x).
\]
Hence the condition
\[
(L_a)_*\mu=j(a)\mu
\]
is equivalent to
\[
\int_Q f(ax)\,d\mu(x)
=
j(a)\int_Q f(x)\,d\mu(x)
\]
for all nonnegative Borel $f$ (and therefore also for all integrable $f$).

\begin{remark}
In the group case, if one works with a left Haar measure, then left
translations preserve the measure exactly, so the corresponding left cocycle is
identically equal to $1$.  The nontrivial modular function arises from right
translations.  In the quasigroup setting we do not have that asymmetry a priori,
so it is natural to allow a cocycle already on the left.
\end{remark}

\subsection{Two-sided quasi-invariance}

The basic calculation involving the Moufang identity naturally mixes left and
right translations.  For that reason, if one wants to perform actual
measure-theoretic derivations, it is convenient to impose quasi-invariance for
right translations as well.

\begin{definition}
A regular Borel measure $\mu$ on a locally compact Hausdorff topological
quasigroup $Q$ is called \emph{two-sided quasi-invariant} if there exist
positive functions
\[
j,\rho:Q\to\mathbb{R}_{>0}
\]
such that
\[
(L_a)_*\mu = j(a)\mu,\qquad (R_a)_*\mu=\rho(a)\mu
\]
for all $a\in Q$.
\end{definition}

Again, the right quasi-invariance condition is equivalent to
\[
\int_Q f(xa)\,d\mu(x)
=
\rho(a)\int_Q f(x)\,d\mu(x)
\]
for all suitable $f$.

\begin{remark}
The assumption of two-sided quasi-invariance is admittedly
strong.  It is introduced here in order to make the cocycle
calculation transparent when both left and right translations
appear in the Moufang identity.  The existence and naturality
of such measures remain open problems.
\end{remark}

\begin{remark}
This two-sided hypothesis is strong, and we do not claim here that it holds for
all locally compact topological quasigroups.  Its role is methodological: it
provides a clean framework in which one can derive cocycle relations from
translation identities.  The existence problem for such measures is discussed
later as part of the research program rather than as an established theorem.
\end{remark}

\subsection{Elementary consequences}

We record a simple but useful functorial fact about push-forwards.

\begin{lemma}
Let $X$ be a locally compact Hausdorff space, let $\mu$ be a regular Borel
measure on $X$, and let $S,T:X\to X$ be homeomorphisms.  Then
\[
(S\circ T)_* \mu = S_*(T_*\mu).
\]
\end{lemma}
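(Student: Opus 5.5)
The plan is to verify the identity directly from the definition of push-forward, evaluating both sides on an arbitrary Borel set $E\subseteq X$. The single set-theoretic fact needed is
\[
(S\circ T)^{-1}(E) = T^{-1}\bigl(S^{-1}(E)\bigr),
\]
which holds for any maps $S,T:X\to X$. It follows by checking membership: $x$ belongs to the left side exactly when $S(T(x))\in E$. That, in turn, happens exactly when $T(x)\in S^{-1}(E)$, that is, when $x\in T^{-1}(S^{-1}(E))$.

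Before using this, I would check that every set involved is Borel, so that all measures are evaluated on legitimate sets. Since $S$ and $T$ are homeomorphisms, they are continuous and hence Borel measurable. Therefore $S^{-1}(E)$, $T^{-1}(S^{-1}(E))$ and $(S\circ T)^{-1}(E)$ are Borel whenever $E$ is.

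With this in place, the computation is a chain of definitions:
\[
\bigl((S\circ T)_*\mu\bigr)(E)=\mu\bigl((S\circ T)^{-1}(E)\bigr)=\mu\bigl(T^{-1}(S^{-1}(E))\bigr)=(T_*\mu)\bigl(S^{-1}(E)\bigr)=\bigl(S_*(T_*\mu)\bigr)(E).
\]
Because $E$ is arbitrary, the two measures coincide.

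There is no real obstacle. The only point needing care is keeping the order of composition straight: preimages reverse the order of composition, while push-forwards preserve it, so $(S\circ T)_*=S_*\circ T_*$ rather than $T_*\circ S_*$. If one also wants the statement within the class of regular Borel measures, I would add that the push-forward of a regular measure under a homeomorphism is again regular. This holds because homeomorphisms map compact sets to compact sets and open sets to open sets, so inner and outer regularity transfer. This remark is not needed for the equality itself, which is an identity of set functions on the Borel $\sigma$-algebra.
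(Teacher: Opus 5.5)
Your proposal is correct and follows the same route as the paper: evaluate both sides on an arbitrary Borel set $E$, use $(S\circ T)^{-1}(E)=T^{-1}(S^{-1}(E))$, and unwind the definition of push-forward. Your added checks, that the preimages are Borel and that regularity is preserved under homeomorphisms, are correct; they make explicit what the paper leaves implicit.
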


\begin{proof}
For every Borel set $E\subseteq X$,
\[
((S\circ T)_*\mu)(E)
=
\mu((S\circ T)^{-1}(E))
=
\mu(T^{-1}(S^{-1}(E)))
=
(T_*\mu)(S^{-1}(E))
=
(S_*(T_*\mu))(E).
\]
\end{proof}

Applying this to left and right translations gives the basic computational rule
used throughout the paper.

\section{The Moufang Identity and Translation Calculus}

We now turn to the Moufang-type identity
\[
((xy)z)y = x(y(zy)).
\tag{N1}
\]

The key point is that this identity can be rewritten as an operator identity on
$Q$.

\section{Deriving Cocycle Relations from $(N1)$}

We now explain in detail how $(N1)$ constrains the modular cocycles, under the
additional assumption of two-sided quasi-invariance.

The Moufang identity can be expressed naturally in terms
of translation operators.  This formulation will be useful
for the measure-theoretic computations below.

\begin{proposition}
Let $Q$ be a locally compact Hausdorff topological quasigroup satisfying
$(N1)$.  Assume that $\mu$ is a two-sided quasi-invariant measure on $Q$, so
that
\[
(L_a)_*\mu=j(a)\mu,\qquad (R_a)_*\mu=\rho(a)\mu
\]
for positive functions $j,\rho:Q\to\mathbb{R}_{>0}$.
Then for all $x,y\in Q$,
\[
j(xy)\rho(y)=j(x)j(y)\rho(y).
\]
Since $\rho(y)>0$, this simplifies to
\[
j(xy)=j(x)j(y).
\]
\end{proposition}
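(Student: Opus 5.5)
The plan is to push $\mu$ forward along both sides of the operator identity from Theorem~\ref{thm:translation-form-N1},
\[
R_y\circ L_{xy} = L_x\circ L_y\circ R_y \qquad (x,y\in Q),
\]
and compare the resulting scalar multiples of $\mu$. Since all translations are homeomorphisms, the functoriality lemma $(S\circ T)_*\mu=S_*(T_*\mu)$ applies to each composite.

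For the left-hand side, I would first compute $(L_{xy})_*\mu=j(xy)\mu$. Then I use linearity of push-forward under positive scalars, $(R_y)_*(c\mu)=c\,(R_y)_*\mu$, which is immediate from the definition $T_*\nu(E)=\nu(T^{-1}E)$. This gives
\[
(R_y\circ L_{xy})_*\mu = j(xy)\rho(y)\mu .
\]

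For the right-hand side, I apply the lemma twice, peeling off $R_y$, then $L_y$, then $L_x$:
\[
(L_x\circ L_y\circ R_y)_*\mu=(L_x)_*\bigl((L_y)_*(\rho(y)\mu)\bigr)=\rho(y)j(y)j(x)\mu .
\]

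Since the two operators coincide, their push-forwards coincide, so $j(xy)\rho(y)\mu=j(x)j(y)\rho(y)\mu$.

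The only genuinely delicate step is cancelling $\mu$ to obtain an equality of scalars. This requires $\mu$ to be a nonzero measure, so that there is a Borel set $E$ with $0<\mu(E)<\infty$; regularity together with local compactness gives a compact set of finite positive measure. I would make this nondegeneracy assumption explicit, as is implicit in calling $\mu$ Haar-type. Evaluating both sides at such an $E$ then gives $j(xy)\rho(y)=j(x)j(y)\rho(y)$. Dividing by $\rho(y)>0$ yields $j(xy)=j(x)j(y)$.

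No continuity or measurability of $j$ or $\rho$ is needed: the argument is purely pointwise in $x$ and $y$.
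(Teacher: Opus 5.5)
Your proposal is correct and matches the paper's proof step for step: you push $\mu$ forward along both sides of $R_y\circ L_{xy}=L_x\circ L_y\circ R_y$, pull out the scalars using functoriality, and then cancel. Your extra care at the cancellation step is a genuine sharpening of the paper's bare appeal to ``$\mu$ is nonzero''. You correctly note that you need a Borel set $E$ with $0<\mu(E)<\infty$, since otherwise $c\mu=d\mu$ need not force $c=d$. Obtaining such an $E$ from regularity, however, requires that your notion of regular Borel measure include finiteness on compact sets.
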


\begin{proof}
Fix $x,y\in Q$.  By the operator form of $(N1)$ established above, we have
\[
R_y\circ L_{xy}=L_x\circ L_y\circ R_y.
\]
Taking push-forwards and using the functoriality lemma,
\[
(R_y\circ L_{xy})_*\mu
=
(R_y)_*((L_{xy})_*\mu),
\]
and similarly
\[
(L_x\circ L_y\circ R_y)_*\mu
=
(L_x)_*((L_y)_*((R_y)_*\mu)).
\]

We now compute each side separately.

For the left-hand side,
\[
(L_{xy})_*\mu=j(xy)\mu
\]
by definition of $j$.  Applying $(R_y)_*$ gives
\[
(R_y)_*((L_{xy})_*\mu)
=
(R_y)_*(j(xy)\mu)
=
j(xy)(R_y)_*\mu
=
j(xy)\rho(y)\mu.
\]

For the right-hand side, first use
\[
(R_y)_*\mu=\rho(y)\mu.
\]
Then apply $(L_y)_*$:
\[
(L_y)_*((R_y)_*\mu)
=
(L_y)_*(\rho(y)\mu)
=
\rho(y)(L_y)_*\mu
=
\rho(y)j(y)\mu.
\]
Now apply $(L_x)_*$:
\[
(L_x)_*((L_y)_*((R_y)_*\mu))
=
(L_x)_*(\rho(y)j(y)\mu)
=
\rho(y)j(y)(L_x)_*\mu
=
\rho(y)j(y)j(x)\mu.
\]

Since the two composite transformations are equal, their push-forwards of $\mu$
are equal.  Hence
\[
j(xy)\rho(y)\mu=\rho(y)j(y)j(x)\mu.
\]
Because $\mu$ is nonzero and $\rho(y)>0$, it follows that
\[
j(xy)=j(x)j(y).
\]
\end{proof}

\begin{remark}
This is the cleanest rigorous form of the cocycle calculation.  The right
quasi-invariance hypothesis is essential for the derivation as written, because
$(N1)$ mixes left and right translations.  Without some control on
$(R_y)_*\mu$, one cannot simply cancel the right-translation term.
\end{remark}

The proposition shows that, under two-sided quasi-invariance, the left modular
cocycle behaves multiplicatively with respect to the quasigroup product.  This
is formally analogous to the multiplicativity of the modular function on a
locally compact group.

\begin{remark}
At this point one should be cautious.  The identity
\[
j(xy)=j(x)j(y)
\]
does not by itself prove that $j\equiv 1$.  Additional structural input is
required.  The main point of the present paper is that $(N1)$ naturally drives
the measure-theoretic structure toward a group-like modular behavior, and that
this behavior is plausibly related to loop structure.
\end{remark}

\begin{remark}
The multiplicativity relation for $j$ does not by itself imply
that the cocycle is trivial.  In the group case, nontrivial
characters frequently exist.  The possibility that Moufang-type
identities impose additional rigidity on such multiplicative
functions remains an interesting open question.
\end{remark}

\section{Representation of the translation group}

\begin{theorem}[Translation representation]
Let $Q$ be a quasigroup and let
\[
\chi:Q\to\mathbb{R}_{>0}
\]
be a multiplicative function satisfying
\[
\chi(xy)=\chi(x)\chi(y).
\]
Then the map
\[
\pi : \mathrm{LMlt}(Q) \to \mathbb{R}_{>0}
\]
defined by
\[
\pi(L_a)=\chi(a)
\]
extends uniquely to a group homomorphism
\[
\pi:\mathrm{LMlt}(Q)\to\mathbb{R}_{>0}.
\]
\end{theorem}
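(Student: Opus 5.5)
The idea is to avoid defining $\pi$ on words in the generators $L_a^{\pm1}$, which would force us to check that every relation among left translations is respected. Instead, I would define $\pi$ intrinsically through the action of $\mathrm{LMlt}(Q)$ on $Q$ and the function $\chi$. Throughout, $Q$ is assumed nonempty.

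\textbf{Step 1: each generator rescales $\chi$ by a constant.} Multiplicativity gives
\[
\chi(L_a(x))=\chi(ax)=\chi(a)\chi(x).
\]
For the inverse $L_a^{-1}$, write $u=L_a^{-1}(x)$, so that $x=au$. Then $\chi(x)=\chi(a)\chi(u)$, hence $\chi(L_a^{-1}(x))=\chi(a)^{-1}\chi(x)$. So each generator $L_a^{\pm1}$ multiplies $\chi$ by the constant $\chi(a)^{\pm1}$, which does not depend on the point $x$.

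\textbf{Step 2: every $g\in\mathrm{LMlt}(Q)$ rescales $\chi$ by a constant.} Write $g$ as a finite word $L_{a_1}^{e_1}\cdots L_{a_n}^{e_n}$ with $e_i=\pm1$. Induction on $n$ using Step 1 gives
\[
\chi(g(x))=c\,\chi(x)\quad\text{for all }x\in Q,\qquad c=\prod_i\chi(a_i)^{e_i}.
\]
The key point is that $c$ depends only on $g$, not on the chosen word. Indeed, fix any $x_0\in Q$. Then $c=\chi(g(x_0))/\chi(x_0)$, which is determined by $g$ alone because $\chi(x_0)>0$.

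\textbf{Step 3: define $\pi$ and check it is a homomorphism.} Set
\[
\pi(g):=\frac{\chi(g(x))}{\chi(x)},
\]
which is independent of $x$ by Step 2. For $g,h\in\mathrm{LMlt}(Q)$,
\[
\chi(gh(x))=\pi(g)\,\chi(h(x))=\pi(g)\pi(h)\,\chi(x),
\]
so $\pi(gh)=\pi(g)\pi(h)$. By Step 1, $\pi(L_a)=\chi(a)$, so $\pi$ extends the prescribed values on generators.

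\textbf{Step 4: uniqueness.} Any group homomorphism into $\mathbb{R}_{>0}$ is determined by its values on a generating set. Since the $L_a$ generate $\mathrm{LMlt}(Q)$, any homomorphism with $\pi(L_a)=\chi(a)$ must coincide with the one above.

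The only step with real content is the well-definedness in Step 2, since $\mathrm{LMlt}(Q)$ may satisfy many relations that are not visible from $Q$. Evaluating $\chi$ along orbits handles this: if a word equals the identity, then $\chi(x)=c\,\chi(x)$, and positivity of $\chi$ forces $c=1$.
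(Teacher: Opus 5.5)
Your proof is correct. It rests on the same computation as the paper's, $\chi(a(bx))=\chi(a)\chi(b)\chi(x)$: left translations rescale $\chi$ by a constant factor. The difference lies in how $\pi$ is constructed.

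The paper defines $\pi$ only on the generators $L_a$ and checks the rescaling for a single composite $L_a\circ L_b$. It then concludes that ``since $\mathrm{LMlt}(Q)$ is generated by the $L_a$, this determines a unique group homomorphism.'' That step is not justified as written. An assignment on generators extends to a homomorphism only if it respects every relation among the $L_a^{\pm1}$ in $\mathrm{LMlt}(Q)$. The paper never verifies this, and it does not treat the inverses $L_a^{-1}$.

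Your route instead defines $\pi(g)=\chi(g(x))/\chi(x)$ intrinsically from the action, after showing in Steps 1--2 (inverses included) that this ratio does not depend on $x$. Well-definedness and the homomorphism property then come for free. In particular, any word equal to the identity automatically satisfies $\prod_i\chi(a_i)^{e_i}=1$.

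So the paper's version is shorter but leaves the existence half of the statement unproved, while yours actually establishes it. The uniqueness argument, that a homomorphism is determined by its values on a generating set, is the same in both.
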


\begin{proof}

Let $\mathrm{LMlt}(Q)$ be the group generated by the left
translations $L_a$ $(a\in Q)$.

Define $\pi$ on generators by
\[
\pi(L_a)=\chi(a).
\]

Consider two generators $L_a,L_b$.  
Their composition satisfies

\[
(L_a\circ L_b)(x)=a(bx).
\]

Using multiplicativity of $\chi$, we compute

\[
\chi(a(bx))
=
\chi(a)\chi(bx)
=
\chi(a)\chi(b)\chi(x).
\]

Thus

\[
\chi((L_a\circ L_b)(x))
=
\pi(L_a)\pi(L_b)\chi(x).
\]

Therefore the assignment
\[
L_a \mapsto \chi(a)
\]
is multiplicative with respect to composition of
left translations.

Since $\mathrm{LMlt}(Q)$ is generated by the $L_a$,
this determines a unique group homomorphism

\[
\pi:\mathrm{LMlt}(Q)\to\mathbb{R}_{>0}.
\]

\end{proof}

\begin{remark}
If the modular cocycle arising from a Haar-type measure
is multiplicative, the theorem shows that it determines
a one-dimensional representation of the left multiplication
group of the quasigroup.
\end{remark}

\section{Group Case}

\begin{proposition}[Group case]
Let $G$ be a locally compact group and let $\mu$ be a left Haar measure.
Then for every $a\in G$ there exists a positive number $\Delta(a)$ such that
\[
(R_a)_*\mu = \Delta(a)\mu.
\]
The function
\[
\Delta:G\to\mathbb{R}_{>0}
\]
is a continuous group homomorphism satisfying
\[
\Delta(ab)=\Delta(a)\Delta(b).
\]
\end{proposition}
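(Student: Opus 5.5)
The plan is the standard argument via uniqueness of Haar measure. Fix $a\in G$ and consider $\nu_a=(R_a)_*\mu$, so $\nu_a(E)=\mu(R_a^{-1}(E))=\mu(Ea^{-1})$. First I will check that $\nu_a$ is again a left Haar measure. It is a nonzero regular Borel measure because $R_a$ is a homeomorphism, and push-forwards along homeomorphisms preserve regularity, finiteness on compacts and positivity on nonempty open sets. It is left invariant because left and right translations commute in a group, $L_g\circ R_a=R_a\circ L_g$. Using the functoriality lemma for push-forwards,
\[
(L_g)_*\nu_a=(L_g\circ R_a)_*\mu=(R_a\circ L_g)_*\mu=(R_a)_*((L_g)_*\mu)=(R_a)_*\mu=\nu_a .
\]
Uniqueness of left Haar measure up to a positive scalar \cite{HewittRoss,Folland} then gives a unique $\Delta(a)>0$ with $(R_a)_*\mu=\Delta(a)\mu$. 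Note that $\Delta(a)$ does not depend on the choice of $\mu$, since rescaling $\mu$ rescales both sides.

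For multiplicativity, I will use $R_{ab}=R_b\circ R_a$ in a group, because $x(ab)=(xa)b$. The functoriality lemma gives
\[
(R_{ab})_*\mu=(R_b)_*((R_a)_*\mu)=\Delta(a)(R_b)_*\mu=\Delta(a)\Delta(b)\mu ,
\]
and since $\mu\neq0$ this yields $\Delta(ab)=\Delta(a)\Delta(b)$. This is the exact analogue of the computation in the proposition deriving $j(xy)=j(x)j(y)$ from $(N1)$. Also $\Delta(e)=1$, so $\Delta$ is a homomorphism into $\mathbb{R}_{>0}$.

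The main obstacle is continuity, which is the only analytic step. I would pick $f\in C_c(G)$, $f\ge0$, $f\ne0$, so that $\int f\,d\mu>0$. From the integral form of right quasi-invariance,
\[
\Delta(a)=\frac{\int_G f(xa)\,d\mu(x)}{\int_G f\,d\mu}.
\]
It then suffices to show that $a\mapsto\int f(xa)\,d\mu(x)$ is continuous. The key input is right uniform continuity of compactly supported continuous functions: for every $\varepsilon>0$ there is a neighbourhood $V$ of $e$ with $\sup_x|f(xa)-f(xa_0)|<\varepsilon$ whenever $a\in a_0V$. For $a$ in a fixed compact neighbourhood $K$ of $a_0$, all the functions $x\mapsto f(xa)$ are supported in the compact set $\mathrm{supp}(f)K^{-1}$, which has finite $\mu$-measure. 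Hence the difference of the integrals is at most $\varepsilon\,\mu(\mathrm{supp}(f)K^{-1})$, and this gives continuity of $\Delta$ at $a_0$.
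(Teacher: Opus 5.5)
Your proposal is correct and follows the same route as the paper. You apply uniqueness of left Haar measure to $(R_a)_*\mu$, justifying its left invariance via $L_g\circ R_a=R_a\circ L_g$ (which the paper only asserts), and you get multiplicativity from $R_{ab}=R_b\circ R_a$ and functoriality of push-forward; the one difference is that the paper dismisses continuity with ``follows from the continuity of the translation action,'' whereas your argument with a fixed $f\in C_c(G)$, uniform continuity of $f$, and the compact set $\mathrm{supp}(f)K^{-1}$ actually proves that step.
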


\begin{proof}
Let $\mu$ be a left Haar measure on $G$.
By left invariance we have
\[
(L_a)_*\mu=\mu
\qquad (a\in G).
\]

For right translations, the measure $(R_a)_*\mu$ is again a
left invariant measure.  Since Haar measure is unique up to
a positive scalar, there exists $\Delta(a)>0$ such that
\[
(R_a)_*\mu=\Delta(a)\mu .
\]

Let $a,b\in G$.  Using functoriality of pushforward we compute

\[
(R_{ab})_*\mu
=
(R_b\circ R_a)_*\mu
=
(R_b)_*((R_a)_*\mu).
\]

Substituting the definition of $\Delta$ gives

\[
(R_b)_*(\Delta(a)\mu)
=
\Delta(a)(R_b)_*\mu
=
\Delta(a)\Delta(b)\mu .
\]

Thus

\[
\Delta(ab)=\Delta(a)\Delta(b).
\]

Continuity of $\Delta$ follows from the continuity of
the translation action.
\end{proof}

\begin{remark}
In the group case the modular function $\Delta$ measures the
defect of right invariance of the Haar measure.
The multiplicativity of $\Delta$ is a direct consequence of
the associativity of the group operation.

In the quasigroup setting considered in this paper,
multiplicativity of the modular cocycle is instead
derived from the Moufang identity.
\end{remark}

\section{The $ax+b$ Group as a Concrete Model}

We now record a standard example in full detail.

Let
\[
G=\{(a,b)\mid a>0,\ b\in\mathbb{R}\}
\]
with multiplication
\[
(a,b)(a',b')=(aa',\,b+ab').
\]
This is the group of orientation-preserving affine transformations of the real
line,
\[
x\mapsto ax+b.
\]

\subsection{Left Haar measure}

We claim that
\[
d\mu(a,b)=\frac{da\,db}{a^2}
\]
is a left Haar measure on $G$.

Fix $(\alpha,\beta)\in G$.  Left translation is
\[
L_{(\alpha,\beta)}(a,b)=(\alpha a,\ \beta+\alpha b).
\]
Set
\[
u=\alpha a,\qquad v=\beta+\alpha b.
\]
Then
\[
a=\frac{u}{\alpha},\qquad b=\frac{v-\beta}{\alpha},
\]
and the Jacobian determinant is
\[
\left|\frac{\partial(u,v)}{\partial(a,b)}\right|=\alpha^2.
\]
Hence
\[
du\,dv=\alpha^2\,da\,db.
\]
Also,
\[
u^2=(\alpha a)^2=\alpha^2 a^2.
\]
Therefore
\[
\frac{du\,dv}{u^2}
=
\frac{\alpha^2\,da\,db}{\alpha^2 a^2}
=
\frac{da\,db}{a^2}.
\]
Thus the density $a^{-2}\,da\,db$ is invariant under left translation, proving
that it is a left Haar measure.

\subsection{Right translations and the modular function}

Now fix $(\alpha,\beta)\in G$ and consider right translation:
\[
R_{(\alpha,\beta)}(a,b)=(a\alpha,\ b+a\beta).
\]
Set
\[
u=a\alpha,\qquad v=b+a\beta.
\]
Then
\[
a=\frac{u}{\alpha},\qquad b=v-\frac{u}{\alpha}\beta.
\]
The Jacobian matrix is
\[
\begin{pmatrix}
\alpha & 0\\
\beta & 1
\end{pmatrix},
\]
so
\[
\left|\frac{\partial(u,v)}{\partial(a,b)}\right|=\alpha.
\]
Hence
\[
du\,dv=\alpha\,da\,db.
\]
Using again $u=\alpha a$, we obtain
\[
\frac{du\,dv}{u^2}
=
\frac{\alpha\,da\,db}{\alpha^2 a^2}
=
\frac{1}{\alpha}\frac{da\,db}{a^2}.
\]
Equivalently,
\[
\frac{da\,db}{a^2}
=
\alpha\frac{du\,dv}{u^2}.
\]
Thus under right translation,
\[
(R_{(\alpha,\beta)})_*\mu = \alpha\,\mu.
\]

Comparing with the standard defining relation
\[
(R_g)_*\mu=\Delta(g^{-1})\mu,
\]
we find
\[
\Delta((\alpha,\beta)^{-1})=\alpha.
\]
Since
\[
(\alpha,\beta)^{-1}=\left(\alpha^{-1},-\alpha^{-1}\beta\right),
\]
it follows that
\[
\Delta(a,b)=a^{-1}.
\]

This example is useful because it makes the modular defect entirely explicit.
In the quasigroup setting, the cocycle $j$ should be regarded as an analogue of
this phenomenon.

\section{Existence Questions and the Role of the Translation Group}

The preceding sections explain how a modular cocycle behaves once a suitable
measure is given.  The much harder question is whether such measures exist.

For groups, existence comes from the Haar theorem.  For quasigroups, the lack
of associativity obstructs a direct analogue.  A more promising approach is to
look not first at the quasigroup itself, but at the transformation groups
generated by its translations.

If $\mathrm{LMlt}(Q)$ or $\mathrm{Mlt}(Q)$ carries a locally compact group
topology acting continuously on $Q$, and if $Q$ can be modeled as a homogeneous
space for that action, then one may hope to induce a quasi-invariant measure on
$Q$ from a Haar measure on the acting group.  This line of thought suggests
that the correct general setting may involve transformation groups, groupoids,
or measure classes glued from local translation data.

We do not attempt to solve this existence problem here.  Instead, the present
paper isolates the cocycle mechanism that would become relevant once such a
measure is available.

\section{Towards a Measure-Theoretic Interpretation of Kunen's Theorem}

The purpose of this paper is to develop a framework in which
Haar-type structures on topological quasigroups can be studied
via quasi-invariant measures and modular cocycles.
From this measure-theoretic viewpoint, loop identities may be
interpreted as a form of unimodularity in the translation geometry
of quasigroups.

This perspective naturally leads to the following conjecture.

\begin{conjecture}[Modular Collapse Conjecture]
Let $Q$ be a locally compact Hausdorff topological quasigroup
satisfying the Moufang-type identity $(N1)$.
Assume that $Q$ admits a quasi-invariant measure
with modular cocycle $j$.

Then the cocycle collapses:
\[
j \equiv 1 .
\]

In particular, the translation geometry becomes
unimodular in the measure-theoretic sense.
\end{conjecture}

This conjecture is consistent with Kunen's theorem,
which shows that the identity $(N1)$ already imposes
strong structural constraints on quasigroups.

\begin{remark}
A similar conjecture should hold under a Moufang identity
in place of $(N1)$. The identity $(N1)$ is emphasized here
because of its connection with Kunen's theorem.
\end{remark}

This conjecture suggests that Kunen's theorem may admit
a measure-theoretic interpretation in terms of unimodularity
of the translation geometry.

\section{Translation--cocycle compatibility}

\begin{theorem}[Translation--cocycle compatibility]\label{thm:translation-cocycle}
Let $Q$ be a locally compact Hausdorff topological quasigroup satisfying
the Moufang identity
\[
((xy)z)y = x(y(zy)).
\]
Assume that $\mu$ is a nonzero regular Borel measure on $Q$ such that
\[
(L_a)_*\mu = j(a)\mu,
\qquad
(R_a)_*\mu = \rho(a)\mu
\]
for positive functions
\[
j,\rho:Q\to\mathbb{R}_{>0}.
\]
Then, for all $x,y\in Q$,
\[
j(xy)\rho(y)=j(x)j(y)\rho(y).
\]
Since $\rho(y)>0$, it follows that
\[
j(xy)=j(x)j(y).
\]
\end{theorem}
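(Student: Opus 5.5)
The plan is to reduce the statement to the operator form of $(N1)$ and then compare the push-forwards of $\mu$ under the two equal composite maps, exactly as in the earlier Proposition of the section on cocycle relations. First, fix $x,y\in Q$ and invoke Theorem~\ref{thm:translation-form-N1}. It gives the identity of homeomorphisms
\[
R_y\circ L_{xy}=L_x\circ L_y\circ R_y .
\]
Since both sides are the same map $Q\to Q$, their push-forwards of $\mu$ coincide:
\[
(R_y\circ L_{xy})_*\mu=(L_x\circ L_y\circ R_y)_*\mu .
\]

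Second, I will expand each side using the functoriality lemma $(S\circ T)_*\mu=S_*(T_*\mu)$, applied twice on the right. I also need the elementary fact that push-forward commutes with positive scalars: $T_*(c\mu)=c\,T_*\mu$ for $c>0$, which is immediate from the definition $(T_*\nu)(E)=\nu(T^{-1}E)$. Using the quasi-invariance hypotheses $(L_a)_*\mu=j(a)\mu$ and $(R_a)_*\mu=\rho(a)\mu$, the left side becomes $j(xy)\rho(y)\mu$. On the right, the innermost map is applied first: $(R_y)_*\mu=\rho(y)\mu$, then $(L_y)_*$ contributes $j(y)$, then $(L_x)_*$ contributes $j(x)$. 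This yields $j(x)j(y)\rho(y)\mu$.

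Third, I will conclude the scalar identity from the measure identity $j(xy)\rho(y)\mu=j(x)j(y)\rho(y)\mu$. This is the only point that needs any care. Because $\mu$ is nonzero, there is a Borel set $E$ with $0<\mu(E)$. If $\mu(E)$ could be infinite, I would shrink $E$: by regularity, $\mu$ of some compact subset of an open set of positive measure is positive and finite, or equivalently one can test against a compactly supported continuous function with finite nonzero integral. Evaluating both sides at such an $E$ and dividing by $\mu(E)\in(0,\infty)$ gives $j(xy)\rho(y)=j(x)j(y)\rho(y)$. Dividing by $\rho(y)>0$ then gives $j(xy)=j(x)j(y)$.

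There is no real obstacle here. The statement is essentially a restatement of the earlier Proposition, and the main subtlety is the cancellation of $\mu$, which needs a set of finite positive measure rather than merely nonzero $\mu$. I would handle it via regularity as above, or note that the standing convention for regular Borel measures (finite on compacts) supplies such a set.
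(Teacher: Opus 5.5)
Your proposal is correct and follows the paper's route. You rewrite $(N1)$ in operator form as $R_y\circ L_{xy}=L_x\circ L_y\circ R_y$, push $\mu$ forward along both sides using functoriality and the scalar-linearity of push-forward to get $j(xy)\rho(y)\mu=j(x)j(y)\rho(y)\mu$, and then cancel. Your extra care at the cancellation step is a genuine refinement of the paper's ``because $\mu$ is nonzero'': you produce a set of finite positive measure using finiteness on compact sets. Without that convention the conclusion can fail; for example, the measure assigning $\infty$ to every nonempty set would satisfy the hypotheses for arbitrary $j$.
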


\begin{proof}
By Theorem~\ref{thm:translation-form-N1}, the Moufang identity $(N1)$ is
equivalent to the operator identity
\[
R_y\circ L_{xy}=L_x\circ L_y\circ R_y
\qquad (x,y\in Q).
\]
Fix $x,y\in Q$.
Applying push-forward to both sides of this identity, and using the
functoriality of push-forward, we obtain
\[
(R_y\circ L_{xy})_*\mu
=
(R_y)_*((L_{xy})_*\mu)
\]
and
\[
(L_x\circ L_y\circ R_y)_*\mu
=
(L_x)_*((L_y)_*((R_y)_*\mu)).
\]

We now compute each side separately.

\medskip

\noindent
\textbf{Left-hand side.}
Since
\[
(L_{xy})_*\mu=j(xy)\mu,
\]
we have
\[
(R_y)_*((L_{xy})_*\mu)
=
(R_y)_*(j(xy)\mu).
\]
Because $j(xy)$ is a scalar, it factors out of the push-forward:
\[
(R_y)_*(j(xy)\mu)=j(xy)(R_y)_*\mu.
\]
Using the right quasi-invariance relation
\[
(R_y)_*\mu=\rho(y)\mu,
\]
we get
\[
(R_y)_*((L_{xy})_*\mu)=j(xy)\rho(y)\mu.
\]

\medskip

\noindent
\textbf{Right-hand side.}
First use
\[
(R_y)_*\mu=\rho(y)\mu.
\]
Applying $(L_y)_*$ gives
\[
(L_y)_*((R_y)_*\mu)
=
(L_y)_*(\rho(y)\mu)
=
\rho(y)(L_y)_*\mu
=
\rho(y)j(y)\mu.
\]
Now apply $(L_x)_*$:
\[
(L_x)_*((L_y)_*((R_y)_*\mu))
=
(L_x)_*(\rho(y)j(y)\mu).
\]
Again the scalar factors out:
\[
(L_x)_*(\rho(y)j(y)\mu)
=
\rho(y)j(y)(L_x)_*\mu.
\]
Using
\[
(L_x)_*\mu=j(x)\mu,
\]
we obtain
\[
(L_x)_*((L_y)_*((R_y)_*\mu))
=
\rho(y)j(y)j(x)\mu.
\]

\medskip

Since
\[
R_y\circ L_{xy}=L_x\circ L_y\circ R_y,
\]
their push-forwards of $\mu$ must coincide.  Therefore
\[
j(xy)\rho(y)\mu
=
\rho(y)j(y)j(x)\mu.
\]
Because $\mu$ is nonzero and $\rho(y)>0$, we conclude that
\[
j(xy)=j(x)j(y).
\]
This proves the theorem.
\end{proof}

\section{Lemma for Normalization of multiplicative cocycles}

\begin{remark}
The normalization condition $\chi(e)=1$ is the quasigroup analogue
of the standard normalization for characters on groups.
In the present context it shows that once loop structure emerges,
the modular cocycles are automatically normalized at the identity.
\end{remark}

\begin{lemma}[Normalization of multiplicative cocycles]
Let $Q$ be a quasigroup and let
\[
\chi:Q\to\mathbb{R}_{>0}
\]
be a multiplicative function, i.e.
\[
\chi(xy)=\chi(x)\chi(y)
\qquad (x,y\in Q).
\]
If $Q$ has a two-sided identity element $e$, then
\[
\chi(e)=1.
\]
In particular, if a modular cocycle $j$ or $\rho$ is multiplicative on a loop,
then it is normalized at the identity.
\end{lemma}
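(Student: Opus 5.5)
The plan is to substitute $x=y=e$ into the multiplicativity relation and then cancel, using that $\chi$ takes values in $\mathbb{R}_{>0}$. Since $e$ is a two-sided identity, $ee=e$. Multiplicativity then gives
\[
\chi(e)=\chi(ee)=\chi(e)\chi(e)=\chi(e)^2 .
\]
Because $\chi(e)>0$, it is nonzero and we may divide by it to get $\chi(e)=1$. Only the fact that $e$ is idempotent is used here. So the same argument works for any idempotent element of a quasigroup, and the identity element is simply the natural case.

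For the ``in particular'' clause, take a multiplicative cocycle $j$ (for instance one produced by Theorem~\ref{thm:translation-cocycle}, or by the earlier Proposition under two-sided quasi-invariance). Apply the argument above with $\chi=j$ on the loop $Q$. The cocycle $\rho$ is handled in exactly the same way, provided $\rho$ is assumed multiplicative; the hypothesis of the statement grants this, and nothing proved earlier in the paper is needed for it.

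There is no real obstacle. The one point needing care is the cancellation step, which requires $\chi(e)\neq 0$; this is guaranteed because the codomain is $\mathbb{R}_{>0}$. If $\chi$ were merely real-valued, $\chi(e)=0$ would also be possible, so the positivity hypothesis is essential.

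As a cross-check, one can also take just $y=e$, giving $\chi(x)=\chi(xe)=\chi(x)\chi(e)$. Cancelling $\chi(x)>0$ again yields $\chi(e)=1$, and this version uses only that $e$ is a right identity.
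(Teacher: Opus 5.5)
Your proof is correct and is essentially the paper's argument: $ee=e$ gives $\chi(e)=\chi(e)^2$, and you cancel using $\chi(e)>0$. Your extra observations are also valid but not needed for the lemma: the argument works for any idempotent, and a right identity suffices via $\chi(x)=\chi(x)\chi(e)$.
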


\begin{proof}
Let $e$ be a two-sided identity element of $Q$.
Then
\[
ee=e.
\]
Applying multiplicativity of $\chi$, we obtain
\[
\chi(e)=\chi(ee)=\chi(e)\chi(e)=\chi(e)^2.
\]
Since $\chi(e)>0$, it follows that
\[
\chi(e)=1.
\]
\end{proof}

\section{Unsolved Problems}

\begin{problem}[Modular cocycle rigidity]
Let $Q$ be a locally compact topological Moufang quasigroup.
Suppose there exists a quasi-invariant measure $\mu$ such that
\[
(L_a)_*\mu = j(a)\mu .
\]
Assume that the modular cocycle $j$ satisfies
\[
j(xy)=j(x)j(y).
\]
Under what conditions must $j$ be trivial, i.e.
\[
j \equiv 1 ?
\]
\end{problem}

\begin{problem}
Let $Q$ be a locally compact topological quasigroup.
Under what conditions does there exist a quasi-invariant
measure satisfying
\[
(L_a)_*\mu = j(a)\mu ?
\]
\end{problem}

\section{Existence Issues and \\the Unimodularity Perspective}

\subsection{The existence problem}

In contrast to the classical Haar theorem for locally compact groups,
the existence of quasi-invariant measures on general locally compact
topological quasigroups is not currently known in full generality.

Accordingly, the results of this paper should be interpreted as
conditional statements describing structural consequences of the
existence of such measures.  More precisely, throughout the paper
we assume the existence of a regular Borel measure $\mu$ satisfying
the quasi-invariance relations
\[
(L_a)_*\mu = j(a)\mu,
\qquad
(R_a)_*\mu = \rho(a)\mu .
\]
Under this assumption we derived a compatibility relation between
the translation structure and the modular cocycle, and showed that
the Moufang identity forces the multiplicativity relation
\[
j(xy)=j(x)j(y).
\]

Thus the main contribution of the present work is to identify
structural consequences of the existence of quasi-invariant measures
on topological quasigroups.

\subsection{Finite quasigroups as a basic example}

Although the existence problem is open in general, finite quasigroups
provide a natural class of examples in which the framework becomes
trivial but consistent.

\begin{proposition}
Let $Q$ be a finite quasigroup and let $\mu$ be the counting measure
\[
\mu(E)=|E|.
\]
Then $\mu$ is invariant under all left and right translations.
Consequently, the associated modular cocycle is trivial,
\[
j\equiv 1, \qquad \rho\equiv 1 .
\]
\end{proposition}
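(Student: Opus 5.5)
The plan is to reduce everything to the bijectivity of translations, using the definition of push-forward $(T_*\mu)(E)=\mu(T^{-1}(E))$ from the section on quasi-invariance. A finite quasigroup carries the discrete topology: it must be Hausdorff, and a finite Hausdorff space is discrete. So every subset $E\subseteq Q$ is Borel, the counting measure is a nonzero regular Borel measure, and $Q$ is locally compact. This places us in the setting of the definitions of left and two-sided quasi-invariant measures.

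Next, fix $a\in Q$. By the quasigroup axioms, $L_a$ and $R_a$ are bijections of $Q$, so $L_a^{-1}$ and $R_a^{-1}$ are bijections as well. For any $E\subseteq Q$ this gives
\[
((L_a)_*\mu)(E)=|L_a^{-1}(E)|=|E|=\mu(E),
\]
and likewise $((R_a)_*\mu)(E)=|R_a^{-1}(E)|=|E|=\mu(E)$. Hence $(L_a)_*\mu=\mu$ and $(R_a)_*\mu=\mu$ for every $a$, so $\mu$ is two-sided quasi-invariant with $j(a)=\rho(a)=1$.

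Finally, the cocycles are uniquely determined. If $(L_a)_*\mu=c\,\mu$ for some $c>0$, we evaluate at $E=Q$ and get $|Q|=c|Q|$. Since a quasigroup is nonempty, $|Q|>0$ (nonemptiness is implicit; if it is not granted, note that the statement is vacuous for the empty quasigroup). Therefore $c=1$, and so $j\equiv 1$ and $\rho\equiv 1$.

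There is no real obstacle here. The only points needing care are the topological bookkeeping: Hausdorff forces the discrete topology, so all sets are measurable. We also use that translations are bijections, not merely injections, although on a finite set injectivity already suffices.
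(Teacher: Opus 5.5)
Your proof is correct and is essentially the paper's argument: since $L_a$ and $R_a$ are bijections of the finite set $Q$, the counting measure satisfies $(L_a)_*\mu=\mu$ and $(R_a)_*\mu=\mu$, so $j\equiv 1$ and $\rho\equiv 1$. Your two additions fill in details the paper leaves implicit. The first is that every subset is Borel because a finite Hausdorff space is discrete. The second is that the scalar is uniquely determined, which follows from evaluating at $E=Q$ with $Q\neq\emptyset$.
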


\begin{proof}
Since every translation $L_a$ and $R_a$ is a bijection of the finite
set $Q$, the counting measure is preserved:
\[
(L_a)_*\mu=\mu, \qquad (R_a)_*\mu=\mu .
\]
Thus the cocycle factors are equal to $1$.
\end{proof}

This simple example shows that the modular cocycle may be interpreted
as measuring the failure of translation invariance beyond the finite
case.

\subsection{Unimodularity and loop structure}

In the theory of locally compact groups, the modular function
\[
\Delta:G\to\mathbb{R}_{>0}
\]
measures the defect of right invariance of Haar measure.
A group is called \emph{unimodular} if $\Delta\equiv 1$.

The present framework suggests a similar interpretation in the
quasigroup setting.  The function $j:Q\to\mathbb{R}_{>0}$ obtained
from quasi-invariance plays the role of a modular cocycle.

\section{Conclusion}

We introduced a framework for studying Haar-type structures on
topological quasigroups using quasi-invariant measures and modular
cocycles. Under explicit two-sided quasi-invariance assumptions,
we showed that the Moufang-type identity $(N1)$ yields a rigorous
multiplicativity relation for the left cocycle.

This places the quasigroup situation in close analogy with the
classical theory of the modular function on locally compact groups.

Several problems remain open, most importantly the existence theory
for quasi-invariant measures on topological quasigroups and the
precise mechanism by which cocycle rigidity might force the existence
of an identity element. These questions suggest that the interaction
among quasigroup theory, topology, and harmonic analysis deserves
further study.

\begin{remark}
The purpose of this note is not to provide a new proof of
Kunen's theorem. Rather, we propose a measure-theoretic
framework in which the Moufang identity interacts with
translation geometry through modular cocycles.
We hope that this viewpoint may stimulate further work
connecting quasigroup theory with harmonic analysis.
\end{remark}

\begin{remark}
In this paper we proposed a framework for studying Haar-type
structures on topological quasigroups using quasi-invariant
measures and modular cocycles. This perspective suggests that
loop identities may admit a measure-theoretic interpretation
in terms of unimodularity of the translation geometry.
In particular, the conjectural collapse of the modular cocycle
under the identity $(N1)$ would provide a new viewpoint on
Kunen's theorem from the perspective of harmonic analysis on
quasigroup-like structures.
\end{remark}

\paragraph{Future directions.}
Several natural questions arise from the present framework.
First, it would be important to develop a general existence theory
for quasi-invariant measures on locally compact topological quasigroups.
Second, one may ask under what structural conditions the rigidity of
the modular cocycle forces the existence of an identity element,
thereby producing loop structure.
These problems suggest that the interaction between quasigroup theory,
topology, and harmonic analysis deserves further investigation.

\appendix

\section{Structural Diagram of the Measure-Theoretic Framework}

For the convenience of the reader, we summarize the conceptual
architecture of the paper.

\paragraph{Interpretation.}
The diagram clarifies the conceptual mechanism developed in this paper.
The Moufang-type identity $(N1)$ constrains the translation calculus
of the quasigroup.  When a quasi-invariant measure exists, this
constraint propagates to the modular cocycle and forces a rigidity
relation.  The resulting multiplicativity of the cocycle suggests
a representation-theoretic interpretation of the translation geometry.
In this sense, the emergence of loop structure in Kunen's theorem
may be interpreted as a collapse of the modular defect.

\begin{center}
\begin{tikzpicture}[
node distance=2.2cm,
box/.style={rectangle, draw, rounded corners, align=center, minimum width=4cm},
arrow/.style={->, thick}
]

\node[box] (Q) {Topological quasigroup $Q$};

\node[box, below of=Q] (N1) {Moufang-type identity $(N1)$};

\node[box, below of=N1] (T) {Translation geometry\\
Left and right translations $L_a, R_a$};

\node[box, below of=T] (TI) {Translation identity\\
$R_y \circ L_{xy} = L_x \circ L_y \circ R_y$};

\node[box, below of=TI] (M) {Quasi-invariant measure $\mu$};

\node[box, below of=M] (C) {Modular cocycles $j,\rho$};

\node[box, below of=C] (R) {Rigidity relation\\
$j(xy)\rho(y)=j(x)j(y)\rho(y)$};

\node[box, below of=R] (Mul) {Multiplicativity of $j$};

\node[box, below of=Mul] (Rep) {Representation viewpoint\\
$\pi:\mathrm{LMlt}(Q)\to\mathbb{R}_{>0}$};

\draw[arrow] (Q) -- (N1);
\draw[arrow] (N1) -- (T);
\draw[arrow] (T) -- (TI);
\draw[arrow] (TI) -- (M);
\draw[arrow] (M) -- (C);
\draw[arrow] (C) -- (R);
\draw[arrow] (R) -- (Mul);
\draw[arrow] (Mul) -- (Rep);

\end{tikzpicture}
\end{center}

\paragraph{Kunen collapse mechanism.}
Kunen's theorem shows that the Moufang-type identity $(N1)$
imposes strong constraints on the translation structure of a quasigroup.
These constraints force the existence of a two-sided identity element,
and hence the quasigroup collapses to a loop.
The diagram illustrates this structural mechanism.

\begin{center}
\begin{tikzpicture}[
node distance=2.5cm,
box/.style={rectangle, draw, rounded corners, align=center, minimum width=4cm},
arrow/.style={->, thick}
]

\node[box] (Q) {Quasigroup $Q$};

\node[box, below of=Q] (N1) {Moufang-type identity $(N1)$};

\node[box, below of=N1] (TR) {Translation constraints};

\node[box, below of=TR] (FIX) {Existence of a two-sided identity};

\node[box, below of=FIX] (LOOP) {Loop structure};

\draw[arrow] (Q) -- (N1);
\draw[arrow] (N1) -- (TR);
\draw[arrow] (TR) -- (FIX);
\draw[arrow] (FIX) -- (LOOP);

\end{tikzpicture}
\end{center}

\noindent Takao Inou\'{e}

\noindent Faculty of Informatics

\noindent Yamato University

\noindent Katayama-cho 2-5-1, Suita, Osaka, 564-0082, Japan

\noindent inoue.takao@yamato-u.ac.jp
 
\noindent (Personal) takaoapple@gmail.com (I prefer my personal mail)

\end{document}